\documentclass[11pt]{article}
\usepackage{amsmath}
\usepackage{amsthm}
\usepackage{amsfonts} 

\usepackage{mathabx}
\usepackage{amssymb}
\usepackage[pdftex]{graphicx} 
\usepackage[english]{babel} 
\usepackage[colorlinks = true,
            linkcolor = blue,
            urlcolor  = blue,
            citecolor = blue,
            anchorcolor = blue]{hyperref} 
\usepackage{calc} 
\usepackage{dsfont}
\usepackage{bbm}
\usepackage{enumitem}

\usepackage{authblk}
\usepackage{physics}
\frenchspacing 
\allowdisplaybreaks
\linespread{1.2}
\usepackage[a4paper, lmargin=0.1666\paperwidth, rmargin=0.1666\paperwidth, tmargin=0.1111\paperheight, bmargin=0.1111\paperheight]{geometry} 
\usepackage[all]{nowidow}
\usepackage{lipsum} 
\hypersetup{ 	
	pdfsubject = {},
	pdftitle = {},
	pdfauthor = {}
}

\theoremstyle{plain}
\newtheorem{theorem}{Theorem}[section]
\newtheorem{conjecture}[theorem]{Conjecture}

\theoremstyle{definition}

\theoremstyle{remark}

\newtheoremstyle{named}{}{}{\itshape}{}{\bfseries}{.}{.5em}{\thmnote{#3}#1}
\theoremstyle{named}


\title{On a conjecture of Knuth about forward and back arcs}
\author{Zipei Nie \thanks{Lagrange Mathematics and Computing Research Center, Huawei. Email: niezipei@huawei.com.}}
	
\date{\today}

\begin{document} 
\maketitle
\begin{abstract}
    Following Janson's method, we prove a conjecture of Knuth: the numbers of forward and back arcs for the depth-first search (DFS) in a digraph with a geometric outdegree distribution have the same distribution.
\end{abstract}
\section{Introduction}
A depth-first search (DFS) in a graph starts at an unvisited vertex $v$ and explores each arc from $v$. Each time an arc points to an unvisited vertex $u$, we recursively perform DFS from $u$. Repeat this process until all vertices have been visited.

Each arc is explored exactly once during the DFS. If an arc points to an unvisited vertex in the algorithm, it is called a tree arc. The set of tree arcs forms a spanning forest of the graph, which is called the depth-first forest. We can further classify the remaining arcs. Let $v\to u$ be an arc which is not a tree arc. It is called
\begin{enumerate}[label=(\alph*)]
    \item a loop, if $u=v$;
    \item a forward arc, if $u$ is a descendant of $v$ in the depth-first forest;
    \item a back arc, if $u$ is an ancestor of $v$ in the depth-first forest;
    \item a cross arc, otherwise.
\end{enumerate}

For an integer $n$ and a real parameter $p\in (0,1)$, a digraph with a geometric outdegree distribution is a random multidigraph on $n$ vertices generated in the following way. Independently for each vertex $v$, let $d^+(v)$ be a geometrically distributed random variable with mode $0$ and mean $\frac{p}{1-p}$. Then we create $d^+(v)$ arcs $v\to u$ from each vertex $v$, where these $u$'s are independent uniform random vertices.

In a new section of the book \emph{The Art of Computer Programming}, Knuth proposed the following conjecture on the numbers of forward and back arcs for the DFS in digraphs with geometric outdegree distributions.

\begin{conjecture}\cite[Problem 7.4.1.2-35]{Knuth12A}\label{Conj-1}
Let $F$ and $B$ denote the numbers of forward and back arcs respectively for the DFS in a digraph with a geometric outdegree distribution. Then $F$ and $B$ have the same distribution.
\end{conjecture}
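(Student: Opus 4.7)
My plan is to use the memoryless property of the geometric distribution to give a sequential description of the DFS, to establish a pairwise distributional identity between forward and back arcs at each ancestor--descendant pair in the resulting DFS tree, and then to aggregate these into $F \stackrel{d}{=} B$ via a generating-function recursion in Janson's style.

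\emph{Sequential DFS.} By memorylessness, the digraph together with its DFS forest can be generated in lockstep: at each event, at the current top $v$ of the stack, flip an independent coin with heads probability $p$; on heads, draw a target $u \in [n]$ uniformly and record the arc $v \to u$ (push $u$ if unvisited, otherwise classify as loop, back, forward, or cross); on tails, pop $v$. When the stack empties while unvisited vertices remain, begin anew at any such vertex. This matches the joint law of outdegrees (geometric with parameter $p$) and uniform destinations in the original model.

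\emph{Pairwise identity.} Fix an ancestor--descendant pair $(a, d)$ in the final tree. Every arc $d \to a$ is a back arc (since $a$ sits on the stack throughout $d$'s lifetime), so the number of back arcs from $d$ to $a$ equals the total arc multiplicity from $d$ to $a$, which is $\operatorname{Binom}(\operatorname{Geom}(p), 1/n)$, i.e.\ $\operatorname{Geom}(q)$ with $q = p/(n - p(n-1))$. For forward arcs from $a$ to $d$: if $a$ is a proper (non-parent) ancestor of $d$, then no arc $a \to d$ can appear before $d$ is pushed (else $a$ would be $d$'s parent), so every $a \to d$ arc is forward; its count is, by memorylessness of $a$'s post-$d$-pop coin sequence, again $\operatorname{Geom}(q)$. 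If $a = \operatorname{parent}(d)$, the first $a \to d$ arc is the tree arc and the remaining ones come from the fresh post-tree-arc coin sequence at $a$, again giving a $\operatorname{Geom}(q)$ forward count.

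\emph{Global aggregation and obstacle.} To lift these marginal identities to the distributional equality $F \stackrel{d}{=} B$, I would set up a recursion for the joint generating function $\Phi(x, y) := \mathbb{E}[x^F y^B]$ along the DFS forest, peeling off subtrees one at a time and invoking memorylessness at each tree arc. The aim is to express $\Phi(x, y)$ as a composition of building blocks in which the $x$- and $y$-factors enter interchangeably, so that $\Phi(x, 1) = \Phi(1, x)$, giving the conjecture. The principal obstacle is that forward arcs from a fixed vertex $v$ share a single coin sequence at $v$, whereas back arcs from its descendants to $v$ arise from independent coin sequences at those descendants, so the two sides have genuinely different dependency structures. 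The resolution is to decompose $v$'s coin sequence along its tree arcs into independent $\operatorname{Geom}(p)$ segments, each thinned to a $\operatorname{Geom}(q)$ factor that matches exactly the back-arc contribution from the corresponding descendant subtree. Making this correspondence coherent across all levels of the recursion is the main bookkeeping, and is where Janson's method would have to be applied most carefully.
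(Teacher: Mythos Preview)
Your marginal observations are correct and pleasant: conditional on $(a,d)$ being an ancestor--descendant pair, the count of back arcs $d\to a$ is $\operatorname{Geom}(q)$, and (using memorylessness at the tree arc emanating from $a$ towards the subtree containing $d$) so is the count of forward arcs $a\to d$. But these are only one--pair marginals, and the conjecture asks for equality in law of the \emph{sums} $F$ and $B$. The entire content of the problem lives in the step you label ``global aggregation,'' and that step is not carried out.

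Two concrete difficulties with your proposed resolution. First, the segment decomposition of $v$'s coin sequence by its tree arcs does not yield independent $\operatorname{Geom}(p)$ pieces once you condition on the tree (which the recursion forces you to do): segment $i$ is constrained to hit only already--visited vertices until its final coin, so the ``thin to $\operatorname{Geom}(q)$'' step is not available segment by segment. Second, even granting a clean per--child factorisation, the forward contribution from $v$ to the $i$--th subtree and the back contribution from that subtree to $v$ are \emph{not} exchangeable in the recursion: the forward factor depends on $v$'s depth (through the visited set when $v$ flips), while the back factor depends on the depths of the descendants. Writing $\Phi(x,y)$ recursively along the tree, these asymmetries are exactly what produce Janson's two different recursions $\widehat G_n$ and $\widecheck G_n$: your ``$x$ and $y$ enter interchangeably'' is precisely the identity $\widehat G_n=\widecheck G_n$, which Janson could not prove and which is the theorem of this paper. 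So your plan, once executed, terminates at the same identity, with no mechanism offered for establishing it.

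The paper does something quite different. It takes Janson's reduction as a black box and attacks $\widehat G_n=\widecheck G_n$ purely algebraically, with no further probabilistic input. The device is to interpolate: define a two--index family $F_{k,n}(w)$ with $F_{1,n}=G_n$, compute $F_{2,n}$ and $F_{n-1,n}$ in closed form, and prove by induction on $k+n$ that the telescoping differences satisfy
\[
F_{k,n}(w)-F_{k+1,n}(w)=\frac{G_k(w+x)\,G_{n-k}(w+kz)}{w},
\]
which are exactly the summands of the $\widecheck G_n$ recursion. Summing the telescope gives $\widehat G_n=\widecheck G_n$. The extra index $k$ is the new idea; nothing in your outline supplies an analogue of it.
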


Knuth verified Conjecture \ref{Conj-1} for $n\le 9$ and claimed that ``it \emph{must} be true''. Jacquet and Janson \cite[Theorem 13]{SJ364} proved that $\mathbb{E}(F)=\mathbb{E}(B)$. Janson proposed the following extension of Conjecture \ref{Conj-1}.

\begin{conjecture}\cite[Conjecture 2.1]{J}\label{Conj-2}
Let $L, F,B,C,T$ denote the numbers of loops, forward, backward, cross, and tree arcs respectively for the DFS in a digraph with a geometric outdegree distribution. Then $(L,F,B+C,T)$ and $(L,B,F+C,T)$ have the same distribution.
\end{conjecture}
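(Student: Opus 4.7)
The plan is to follow Janson's method by encoding the depth-first search as a sequential random process that exploits the memoryless property of the geometric distribution. At each step there is a current vertex $v$ at the top of the DFS stack; with probability $1-p$ the vertex $v$ is declared finished and popped, and with probability $p$ an independent uniformly random target $u\in\{1,\dots,n\}$ is drawn. The arc $v\to u$ is then classified as a loop, back, forward, cross, or tree arc according to the status of $u$ relative to the current DFS state, and in the last case $u$ is pushed onto the stack.

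The first step is to observe that Conjecture \ref{Conj-2} is equivalent to the apparently stronger statement $(L,F,B,C,T)\stackrel{d}{=}(L,B,F,C,T)$: since $L+F+B+C+T$ is the total number of arcs, which is already symmetric in $F$ and $B$, the two formulations carry the same information. Next, I would write down the probability of a fixed DFS trace as an explicit product over its steps, the $k$-th factor depending only on the type of the $k$-th action and on the local counts at that moment, namely the depth of the current vertex, the number of its already-completed descendants, the number of other completed vertices, and the number of unvisited vertices. The factors for a back arc and a forward arc issued from a given current vertex differ only through these counts, so the claimed symmetry reduces to a combinatorial identity over DFS traces.

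The heart of the argument, and the main obstacle, will be producing a measure-preserving involution on the space of DFS traces that swaps the counts of forward and back arcs while leaving the loops, cross arcs, and tree-forest structure unchanged. The difficulty is that a forward arc $v\to u$ arises during the processing of $v$ after $u$ has already been completed, whereas a back arc $u\to v$ arises during the processing of $u$ while $v$ is still on the stack; the two events therefore occupy different positions in the trace and correspond to different factors in the step probabilities. My approach is to exhibit, for each ancestor--descendant pair $(v,u)$ in the depth-first forest, a simultaneous rearrangement of the arcs emitted by $v$ and by $u$ that interchanges the roles of these two arcs, and then to verify that this local operation is compatible with the global stack profile and preserves the product form of the trace probability.

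If a direct combinatorial involution proves elusive, a natural fallback is to compute the joint generating function $\mathbb{E}[x^L y^F z^B w^C t^T]$ by a recursion on the DFS subtrees, in the spirit of the Jacquet--Janson computation of $\mathbb{E}(F)$ and $\mathbb{E}(B)$, and verify symmetry in $(y,z)$ directly. This would require tracking, in each recursively defined subtree, an auxiliary variable for the number of vertices already visited in the ambient stack, and is likely to produce rather involved formulas; nevertheless, it serves as a robust backup plan should the bijective approach fail.
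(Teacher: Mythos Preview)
Your submission is a plan, not a proof: neither the bijective involution nor the generating-function recursion is actually carried out, and in each case the part you label as the ``main obstacle'' is exactly where all the content lies.

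Two concrete gaps. First, your opening reduction is wrong: from $(L,F,B+C,T)$ one recovers the total arc count $N=L+F+(B+C)+T$, but this is already a function of those four coordinates and gives no way to separate $B$ from $C$. Hence $(L,F,B,C,T)\stackrel{d}{=}(L,B,F,C,T)$ is genuinely stronger than Conjecture~\ref{Conj-2}; your ``same information'' argument does not establish any equivalence, and Janson deliberately stated only the weaker form. Second, the involution is described only at the level of intent. A forward arc $v\to u$ is emitted while processing $v$ after the whole subtree of $u$ is complete, whereas a back arc $u\to v$ is emitted while processing $u$ with $v$ still on the stack; swapping them perturbs the step probabilities at many different times in the trace, and you supply no mechanism making these perturbations cancel. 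No such bijection is known.

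The paper takes your fallback route, but the decisive idea is not ``set up a recursion and compute''. Janson had already reduced Conjecture~\ref{Conj-2} to the identity $\widehat{G}_n=\widecheck{G}_n$ of Theorem~\ref{J-thm} and could not close the induction directly. The new ingredient is P\'olya's inventor's paradox: one introduces an auxiliary two-index family $F_{k,n}(w)$ and proves the \emph{stronger} telescoping identity
\[
F_{k,n}(w)-F_{k+1,n}(w)=\frac{G_k(w+x)\,G_{n-k}(w+kz)}{w}
\]
of Theorem~\ref{thm2.3}, together with closed forms for the boundary terms $F_{2,n}$ and $F_{n-1,n}$. Summing over $k$ then collapses the defining sum for $\widecheck{G}_n$ onto $G_n=\widehat{G}_n$. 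This extra parameter $k$ is precisely what makes the induction go through, and it is absent from your plan.
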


Janson verified Conjecture \ref{Conj-2} for $n\le 18$. He also showed that Conjecture \ref{Conj-2} is equivalent to an identity involving recursive functions. 

\begin{theorem}\cite[Proposition 3.2 and Remark 3.6]{J}\label{J-thm}
Define $\widehat{G}_n(w,x,z)$ and $\widecheck{G}_n(w,x,z)$ by the recursions
$$\widehat{G}_1(w,x,z)=\widecheck{G}_1(w,x,z)=\frac{1}{1-w},$$
and, for $n\ge 2$,
$$\widehat{G}_n(w,x,z)=\sum_{k=1}^{n-1}\frac{\widehat{G}_{k}(w,x,z)\widehat{G}_{n-k}(w+kz,x,z)}{1-w-(n-1)x},$$
$$\widecheck{G}_n(w,x,z)=\sum_{k=1}^{n-1}\frac{\widecheck{G}_{k}(w+x,x,z)\widecheck{G}_{n-k}(w+kz,x,z)}{1-w}.$$
Then Conjecture \ref{Conj-2} holds if and only if $\widehat{G}_n(w,x,z)=\widecheck{G}_n(w,x,z)$ for all $n\ge 1$.
\end{theorem}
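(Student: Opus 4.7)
The plan is to prove the theorem by identifying $\widehat{G}_n(w,x,z)$ and $\widecheck{G}_n(w,x,z)$, after a suitable substitution of variables, with the joint probability generating functions of $(L,F,B+C,T)$ and $(L,B,F+C,T)$ respectively. The claimed equivalence then follows from the standard fact that two distributions of $\mathbb{Z}_{\ge 0}$-valued random vectors agree if and only if their generating functions do.

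The first ingredient is the recursive structure of the DFS on a digraph with geometric outdegree distribution. By the memorylessness of the geometric distribution one may equivalently generate the outgoing arcs one at a time: at every decision point an independent $(p,1-p)$-biased coin decides whether to emit another arc, and if so its target is chosen uniformly among the $n$ vertices. Conditioning on the set of visited vertices and the active root-to-frontier path, the remainder of the DFS is probabilistically independent of the past and distributed as a smaller DFS on the remaining vertices, with arc classifications reshuffled relative to the already-built forest.

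Using this, a recursion for the joint PGF is derived by conditioning on the size $k$ of the first completed subtree in the depth-first forest. The PGF factors as a product of two smaller PGFs corresponding to the first subtree (size $k$) and the remainder (size $n-k$). The shift $w \mapsto w+kz$ in the second factor records arcs emitted from vertices in the remainder that point back into the first subtree of size $k$; the denominator comes from a geometric sum over the arcs emanating from the initial root before the recursion begins. The two slightly different recursions reflect the two ways of bundling arc types: $\widehat{G}_n$ records $(L,F,B+C,T)$, which merges back arcs with cross arcs and yields the denominator $1-w-(n-1)x$; $\widecheck{G}_n$ records $(L,B,F+C,T)$, which merges forward arcs with cross arcs and yields instead the shift $w \mapsto w+x$ on the first factor and the simpler denominator $1-w$.

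The hard part will be the careful bookkeeping needed to identify the parameters $w,x,z$ precisely with the arc-tracking variables, to handle the classification of arcs (especially loops) at the moment of recursion, and to verify that the PGF satisfies the claimed recursion down to the last term. The subtlety lies in how arcs from the interior of one subtree are reclassified once later subtrees are grown, and in how the geometric outdegree distribution interacts with the tree structure. Once these identifications are rigorously established, the equivalence with Conjecture \ref{Conj-2} follows immediately in both directions.
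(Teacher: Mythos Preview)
The paper does not contain a proof of this theorem: it is quoted verbatim from Janson~\cite{J} (Proposition~3.2 and Remark~3.6) and used as a black box. So there is no ``paper's own proof'' to compare against; all the present paper does with this statement is invoke it once in the proof of Theorem~\ref{main-thm}.

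Your outlined approach --- identifying $\widehat{G}_n$ and $\widecheck{G}_n$, after a change of variables, with the joint probability generating functions of $(L,F,B+C,T)$ and $(L,B,F+C,T)$, and deriving the two recursions by conditioning on the size of the first DFS subtree and exploiting memorylessness of the geometric outdegree --- is indeed the correct strategy and is exactly what Janson carries out in~\cite{J}. The high-level picture you give (the shift $w\mapsto w+kz$ tracking arcs into the already-explored $k$ vertices, the denominator arising from a geometric sum at the root, the two groupings of arc types yielding the two recursions) is accurate.

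That said, what you have written is a plan, not a proof. You explicitly flag ``the hard part'' --- the precise parameter identification, the reclassification of arcs across subtrees, the handling of loops --- and then do not do it. This is where all the content lives: for instance, explaining \emph{why} grouping $B$ with $C$ produces the denominator $1-w-(n-1)x$ while grouping $F$ with $C$ produces instead the shift $w\mapsto w+x$ in the first factor requires a genuine case analysis of how an arc from the root (respectively, from within the first subtree) is classified depending on its target, and your one-sentence gloss does not supply it. If the goal is to reproduce Janson's result rather than merely cite it, those details must be written out.
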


In this note, we prove the identity $\widehat{G}_n(w,x,z)=\widecheck{G}_n(w,x,z)$. As a corollary, Knuth's conjecture and its extension are indeed true.

\begin{theorem}\label{main-thm}
    Define $\widehat{G}_n(w,x,z)$ and $\widecheck{G}_n(w,x,z)$ as in Theorem \ref{J-thm}. Then $$\widehat{G}_n(w,x,z)=\widecheck{G}_n(w,x,z)$$ for all $n\ge 1$.
\end{theorem}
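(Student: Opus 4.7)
I proceed by strong induction on $n$; the base $n=1$ is immediate. For the inductive step, assume $\widehat{G}_m=\widecheck{G}_m=:G_m$ for all $m<n$, so that both recursions determine $\widehat{G}_n$ and $\widecheck{G}_n$ as explicit rational functions of $(w,x,z)$. Clearing denominators, the identity $\widehat{G}_n=\widecheck{G}_n$ becomes
\[
(1-w)\sum_{k=1}^{n-1} G_k(w)\,G_{n-k}(w+kz) \;=\; \bigl(1-w-(n-1)x\bigr)\sum_{k=1}^{n-1} G_k(w+x)\,G_{n-k}(w+kz). \qquad(\star)
\]

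My plan is to unroll each recursion along its first factor to obtain a ``leftmost-spine'' expansion. Iterating the $\widehat{G}$-recursion gives, for $t_i:=m_1+\cdots+m_i$ and compositions $(m_1,\dots,m_d)$ of $n-1$,
\[
\widehat{G}_n(w) \;=\; \frac{1}{1-w}\sum_{(m_1,\dots,m_d)}\;\prod_{i=1}^d \frac{G_{m_i}(w+(1+t_{i-1})z)}{1-w-t_i x},
\]
while iterating the $\widecheck{G}$-recursion analogously gives $\widecheck{G}_n(w)=\sum_{(m_i)}\tfrac{1}{\prod_{j=0}^d(1-w-jx)}\prod_i G_{m_i}\bigl(w+(d-i)x+(1+t_{i-1})z\bigr)$. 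Both expansions are indexed by the same set of compositions of $n-1$, and every factor $G_{m_i}$ that appears is determined explicitly by the induction hypothesis.

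The main obstacle is that these two sums disagree composition-by-composition: on the $\widehat{G}$-side the $x$-shifts live in the denominator while on the $\widecheck{G}$-side they are pushed into the arguments of the $G_{m_i}$, so equality only holds after a global reorganization. A potentially cleaner equivalent form of $(\star)$ is the discrete-derivative identity
\[
\sum_{k=1}^{n-1}\bigl[G_k(w+x)-G_k(w)\bigr]\,G_{n-k}(w+kz) \;=\; (n-1)\,x\,G_n(w),
\]
obtained by subtracting the two recursions, which I would attack by a further round of induction, using the $\widehat{G}$-recursion for $G_n$ on the right and the appropriate recursion for each $G_k$ on the left and then matching terms. The small cases $n=2,3$ collapse to elementary linear relations such as $(1-w-x)+(1-w-x-z)=(1-w-z)+(1-w-2x)$ for $n=3$, which is encouraging: once the correct combinatorial bookkeeping---perhaps via a bijection between decorated plane binary trees indexing the two expansions, or via a telescoping over compositions---is set up, the algebraic verification should fall out.
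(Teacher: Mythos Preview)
Your reduction to the identity $(\star)$ is correct, and both of your leftmost-spine expansions check out. The genuine gap is that the proof stops exactly where the actual work begins: you observe that the two composition-indexed sums disagree term by term and that ``equality only holds after a global reorganization,'' but you never supply that reorganization. The subsequent discrete-derivative identity
\[
\sum_{k=1}^{n-1}\bigl[G_k(w+x)-G_k(w)\bigr]\,G_{n-k}(w+kz)=(n-1)x\,G_n(w)
\]
is an equivalent restatement of $(\star)$, not progress toward it, and your plan to ``attack by a further round of induction \ldots\ and then match terms'' is where the difficulty actually lives: if you expand $G_k(w+x)-G_k(w)$ via the $\widehat{G}$-recursion, the $x$-shifts enter the denominator on one side and the arguments on the other, and naive term-matching does not close up. The hopeful phrases ``perhaps via a bijection'' and ``should fall out'' are not a proof.

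For comparison, the paper does exactly the telescoping you gesture at, but to make it work it introduces an auxiliary two-parameter family $F_{k,n}(w)$ (defined by its own recursion mixing the $x$- and $z$-shifts) and proves the stronger identity
\[
F_{k,n}(w)-F_{k+1,n}(w)=\frac{G_k(w+x)\,G_{n-k}(w+kz)}{w},
\]
together with closed forms for the boundary cases $F_{2,n}$ and $F_{n-1,n}$. The extra index $k$ is the ``inventor's paradox'' ingredient: it provides a statement rich enough to be provable by induction, and summing the telescoping differences yields $(\star)$ in one line. Your proposal lacks any analogue of this extra parameter, and without it the inductive step does not have enough structure to go through.
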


The key to our proof is a stronger identity with one more parameter, namely Theorem \ref{thm2.3}. As described \cite{Polya} in Pólya's book \emph{How to Solve It}, ``the more ambitious plan may have more chances of success.'' The existence of such a stronger identity leads to an elegant proof by induction.

\section{The proof of the identity}
Throughout the section, we treat $x$ and $z$ as constant formal variables. 

Define the function $G_n(w)$ by 
$$G_n(w)= \widehat{G}_n(1-w,-x,-z).$$
Then we have recursive formulas
$$G_1(w)=\frac{1}{w},$$
and, for $n\ge 2$,
$$G_n(w)=\sum_{k=1}^{n-1}\frac{G_k(w)G_{n-k}(w+kz)}{w+(n-1)x}.$$

Let $k$ and $n$ be two positive integers with $k< n$. Define the function $F_{k,n}(w)$ by
$$F_{1,n}(w)=G_{n}(w),$$
and, for $k\ge 2$,
\begin{align*}
    F_{k,n}(w)=& \sum_{i=1}^{n-k-1}\frac{G_{n-k-i}(w+(k+i)z)F_{k,k+i}(w)}{w+(n-1)x}\\&+\sum_{i=1}^{k-1} \frac{(w+iz)G_i(w+x) F_{k-i,n-i}(w+iz)}{w(w+(n-1)x)}.
\end{align*}

First, we derive a formula for $F_{2,n}$.
\begin{theorem}\label{thm2.1}
For each $n\ge 3$, we have $$F_{2,n}(w)= G_n(w) -\frac{G_{n-1}(w+z)}{w(w+x)}.$$
\end{theorem}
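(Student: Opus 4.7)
The plan is to prove Theorem \ref{thm2.1} by strong induction on $n$, starting from the base case $n=3$.

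For $n=3$, the outer sum in the definition of $F_{2,n}$ (ranging over $i=1,\dots,n-k-1=0$) is empty, so $F_{2,3}(w)$ equals the single boundary term $\frac{(w+z)G_1(w+x)G_2(w+z)}{w(w+2x)}$. Substituting the explicit formulas $G_1(w)=1/w$ and $G_2(w)=\frac{1}{w(w+z)(w+x)}$, then expanding $G_3(w)$ via its recursion, makes the identity a one-line common-denominator check.

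For the inductive step, I assume the identity for all $F_{2,j}$ with $3\le j<n$. The $k=2$ case of the $F$-recursion reads
\begin{align*}
F_{2,n}(w)=\sum_{i=1}^{n-3}\frac{G_{n-2-i}(w+(2+i)z)\,F_{2,2+i}(w)}{w+(n-1)x}+\frac{(w+z)G_{n-1}(w+z)}{w(w+x)(w+(n-1)x)}.
\end{align*}
Substituting $F_{2,2+i}(w)=G_{2+i}(w)-\frac{G_{1+i}(w+z)}{w(w+x)}$ splits the outer sum into a difference $A-B$. Reindexing by $j=i+2$ shows that $A$ is the $G_n(w)$-recursion with its $j=1,2$ summands removed, so $A=G_n(w)-\frac{G_{n-1}(w+z)}{w(w+(n-1)x)}-\frac{G_2(w)G_{n-2}(w+2z)}{w+(n-1)x}$. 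Reindexing by $m=i+1$ identifies $B$ with the $G_{n-1}(w+z)$-recursion missing its $m=1$ summand, rescaled by $\frac{w+z+(n-2)x}{w+(n-1)x}$ because the denominator of the $G_{n-1}$-recursion shifts from $w+(n-2)x$ to $w+z+(n-2)x$ under $w\mapsto w+z$.

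The final step is to assemble the pieces. The three contributions to the coefficient of $G_{n-1}(w+z)$ (from the $j=1$ slot removed from $A$, the $m=1$ slot removed from $B$, and the boundary term of $F_{2,n}$) combine through the algebraic identity $-(w+x)-(w+z+(n-2)x)+(w+z)=-(w+(n-1)x)$ and collapse to exactly $-\frac{G_{n-1}(w+z)}{w(w+x)}$. The two stray $G_{n-2}(w+2z)$ terms cancel by virtue of $G_2(w)=\frac{1}{w(w+x)(w+z)}$, and the remaining contributions reassemble into $G_n(w)$. The main obstacle is purely bookkeeping: carrying the two distinct denominators $w+(n-1)x$ and $w+z+(n-2)x$ through the algebra and verifying that both cancellations are exact.
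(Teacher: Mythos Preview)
Your proposal is correct and follows essentially the same route as the paper's proof: induction on $n$, substitution of the hypothesis into the $k=2$ recursion for $F_{2,n}$, reindexing the two resulting sums to match the recursions for $G_n(w)$ and $G_{n-1}(w+z)$, and then the same two cancellations (the $G_{n-2}(w+2z)$ terms via $G_2(w)=\frac{1}{w(w+x)(w+z)}$, and the $G_{n-1}(w+z)$ terms via the identity $-(w+x)-(w+z+(n-2)x)+(w+z)=-(w+(n-1)x)$). The only cosmetic difference is that you treat $n=3$ as a separate base case, whereas the paper's write-up absorbs it into the general step with empty sums.
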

\begin{proof}
We induct on $n$. Suppose $n\ge 3$ and that the statement holds for all values less than $n$. Then by definitions of $F_{k,n}$ and $G_n$, we have
\begin{align*}
    &F_{2,n}(w)\\
    = &\sum_{i=1}^{n-3}\frac{G_{n-2-i}(w+(i+2)z)F_{2,i+2}(w)}{w+(n-1)x}+\frac{(w+z)G_1(w+x) F_{1,n-1}(w+z)}{w(w+(n-1)x)}\\
    =&\sum_{i=1}^{n-3}\frac{G_{n-2-i}(w+(i+2)z)}{w+(n-1)x} \left(G_{i+2}(w) -\frac{G_{i+1}(w+z)}{w(w+x)}\right)\\
    &+\frac{(w+z)G_{n-1}(w+z)}{w(w+x)(w+(n-1)x)}\\
    =&\sum_{i=3}^{n-1}\frac{G_{n-i}(w+iz)G_{i}(w)}{w+(n-1)x}-\sum_{i=2}^{n-2}\frac{G_{n-1-i}(w+(i+1)z)G_{i}(w+z)}{w(w+x)(w+(n-1)x)}\\
    &+\frac{(w+z)G_{n-1}(w+z)}{w(w+x)(w+(n-1)x)}\\
    =&\frac{(w+(n-1)x) G_n(w) -G_{n-2}(w+2z)G_2(w)-G_{n-1}(w+z)G_1(w)}{w+(n-1)x}\\
    &-\frac{ (w+z+(n-2)x)G_{n-1}(w+z)- G_{n-2}(w+2z)G_1(w+z)}{w(w+x)(w+(n-1)x)}\\
    &+\frac{(w+z)G_{n-1}(w+z)}{w(w+x)(w+(n-1)x)}\\
    =&G_n(x) -\frac{G_{n-1}(w+z)}{w(w+x)}.
\end{align*}
Therefore, the statement holds for every $n\ge 3$ by the principle of induction.
\end{proof}

Second, we derive a formula for $F_{n-1,n}$.

\begin{theorem}\label{thm2.2}
    For each $n\ge 2$, we have $$F_{n-1,n}(w)=\frac{ G_{n-1}(w+x)}{w(w+(n-1)z)}.$$
\end{theorem}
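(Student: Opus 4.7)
The plan is to prove Theorem \ref{thm2.2} by induction on $n$, with the base case $n=2$ handled by direct computation: $F_{1,2}(w)=G_2(w)=\frac{1}{w(w+z)(w+x)}$ by the definitions of $F_{1,n}$ and $G_2$, and this matches $\frac{G_1(w+x)}{w(w+z)}=\frac{1}{w(w+z)(w+x)}$.

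For the inductive step with $n\ge 3$, I will expand $F_{n-1,n}(w)$ using its defining recursion. With $k=n-1$, the upper limit $n-k-1$ of the first sum is $0$, so that sum is empty and only the second survives:
$$F_{n-1,n}(w)=\sum_{i=1}^{n-2}\frac{(w+iz)\,G_i(w+x)\,F_{n-1-i,n-i}(w+iz)}{w(w+(n-1)x)}.$$
The key observation is that $n-1-i=(n-i)-1$, so each $F_{n-1-i,n-i}$ appearing in the sum is again of the form $F_{m-1,m}$ with $m=n-i<n$. Invoking the inductive hypothesis,
$$F_{n-1-i,n-i}(w+iz)=\frac{G_{n-i-1}((w+x)+iz)}{(w+iz)\bigl((w+iz)+(n-i-1)z\bigr)}=\frac{G_{n-i-1}((w+x)+iz)}{(w+iz)(w+(n-1)z)}.$$
The $(w+iz)$ factors cancel and the expression collapses to
$$F_{n-1,n}(w)=\frac{1}{w(w+(n-1)x)(w+(n-1)z)}\sum_{i=1}^{n-2}G_i(w+x)\,G_{n-1-i}((w+x)+iz).$$

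To finish, I will recognize the remaining sum as exactly the $G$-recursion applied at the shifted point $w+x$ with total index $n-1$. Indeed the defining recursion gives
$$G_{n-1}(w+x)=\sum_{i=1}^{n-2}\frac{G_i(w+x)\,G_{n-1-i}((w+x)+iz)}{(w+x)+(n-2)x},$$
and since $(w+x)+(n-2)x=w+(n-1)x$, the remaining sum equals $(w+(n-1)x)\,G_{n-1}(w+x)$. Cancelling this factor yields the claimed formula.

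I do not anticipate any serious obstacle; the only delicate point is the index bookkeeping, specifically verifying that shrinking the first and second indices of $F$ by the same amount keeps us in the ``top'' regime $F_{m-1,m}$ so that the inductive hypothesis applies to every term. Once that is checked, the proof is noticeably shorter than the one given for Theorem \ref{thm2.1}, since no auxiliary closed form has to be carried through the induction.
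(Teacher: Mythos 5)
Your proposal is correct and follows essentially the same route as the paper's proof: expand the defining recursion at $k=n-1$ (where the first sum is empty), apply the inductive hypothesis to each $F_{n-1-i,n-i}(w+iz)$, cancel the $(w+iz)$ factors, and recognize the remaining sum as the $G$-recursion for $G_{n-1}(w+x)$ with denominator $w+(n-1)x$. The paper's proof is just a more compressed version of the same computation.
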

\begin{proof}
    We induct on $n$. The base case $n=2$ can be checked directly. Suppose that $n\ge 3$ and that the statement holds for all values less than $n$. Then by definitions of $F_{k,n}$ and $G_n$, we have
    \begin{align*}
    &F_{n-1,n}(w)\\
    =& \sum_{i=1}^{n-2} \frac{(w+iz)G_i(w+x) F_{n-1-i,n-i}(w+iz)}{w(w+(n-1)x)}\\
    =& \sum_{i=1}^{n-2} \frac{G_i(w+x) G_{n-i-1}(w+x+iz) }{w(w+(n-1)x)(w+(n-1)z)}\\
    =&\frac{ G_{n-1}(w+x)}{w(w+(n-1)z)}.
    \end{align*}
Therefore, the statement holds for every $n\ge 2$ by the principle of induction.
\end{proof}

Finally, we derive a formula for the difference between $F_{k,n}$ and $F_{k+1,n}$.
\begin{theorem}\label{thm2.3}
    For each $1\le k\le n-2$, we have $$F_{k,n}(w)-F_{k+1,n}(w)= \frac{G_{k}(w+x)G_{n-k}(w+kz)}{w}.$$
\end{theorem}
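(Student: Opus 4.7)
I will prove the identity by strong induction on $n$. The base case is $n=3$, for which only $k=1$ is admissible; here the claim reduces to Theorem~\ref{thm2.1} together with $G_1(w+x)=1/(w+x)$. For the inductive step, the case $k=1$ again follows directly from Theorem~\ref{thm2.1} since $F_{1,n}=G_n$. The substantive case is $2\le k\le n-2$, which I handle by expanding both $F_{k,n}(w)$ and $F_{k+1,n}(w)$ via their defining recursions and grouping the difference as $(A-B)+(C-D)$, where $A,B$ are the sums involving $F_{k,k+i}$ and $F_{k+1,k+1+i}$ respectively, and $C,D$ are the sums involving $F_{k-i,n-i}$ and $F_{k+1-i,n-i}$ respectively.

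In $A-B$, after shifting the index of $B$ by one, the differences $F_{k,k+i}(w)-F_{k+1,k+i}(w)$ for $i\ge 2$ fall under the induction hypothesis (since $k+i<n$), and a single boundary term involving $F_{k,k+1}(w)$ is left uncancelled; Theorem~\ref{thm2.2} evaluates $F_{k,k+1}(w)=G_k(w+x)/(w(w+kz))$ in closed form. The key observation is that, via $G_1(w+kz)=1/(w+kz)$, this boundary term fits as the missing $i=1$ summand so that the remaining sum becomes a complete instance of the $G$-recursion at argument $w+kz$, collapsing $A-B$ to $\frac{G_k(w+x)(w+kz+(n-k-1)x)G_{n-k}(w+kz)}{w(w+(n-1)x)}$. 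Analogously, shifting $D$ pairs the differences $F_{k-i,n-i}(w+iz)-F_{k+1-i,n-i}(w+iz)$ with the induction hypothesis applied at $w+iz$, leaving a boundary term at $i=k$ handled by $F_{1,n-k}=G_{n-k}$; the $G$-recursion at argument $w+x$ then consolidates the inductive contributions as $(w+kx)G_k(w+x)$, and $C-D$ collapses to $\frac{k(x-z)G_k(w+x)G_{n-k}(w+kz)}{w(w+(n-1)x)}$.

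Adding the two pieces, the common prefactor $\frac{G_k(w+x)G_{n-k}(w+kz)}{w(w+(n-1)x)}$ multiplies $(w+kz+(n-k-1)x)+k(x-z)=w+(n-1)x$, which cancels the denominator and produces $\frac{G_k(w+x)G_{n-k}(w+kz)}{w}$ as desired. The main obstacle is the bookkeeping across the two double sums together with the reindexing; the proof succeeds because the two ``leftover'' boundary terms—arising from Theorem~\ref{thm2.2} in $A-B$ and from $F_{1,n-k}=G_{n-k}$ in $C-D$—are precisely the missing pieces that allow the remaining sums to be simplified by the $G$-recursion, after which a one-line arithmetic identity in $w,x,z,k,n$ closes the argument.
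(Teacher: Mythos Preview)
Your proposal is correct and follows essentially the same route as the paper: both expand $F_{k,n}-F_{k+1,n}$ from the defining recursion, pair the matching summands, apply the induction hypothesis to the inner differences, use Theorem~\ref{thm2.2} for the boundary term $F_{k,k+1}$ and $F_{1,n-k}=G_{n-k}$ for the other boundary, and then collapse the remaining sums via the $G$-recursion. The only cosmetic differences are that the paper inducts on $k+n$ rather than $n$ and cancels the $F_{k,k+1}$ boundary term against the missing $i=1$ summand \emph{after} applying the $G$-recursion rather than before.
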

\begin{proof}
    We induct on $k+n$. By Theorem \ref{thm2.1}, the statement holds for $k=1$. Suppose that $2\le k\le n-2$ and that the statement holds for all values less than $k+n$. Then, by Theorem \ref{thm2.2}, and the definitions of $F_{k,n}$ and $G_n$, we have
    \begin{align*}
        &F_{k,n}(w)-F_{k+1,n}(w)\\
        =&\sum_{i=0}^{n-k-2}\frac{G_{n-k-i-1}(w+(k+i+1)z)F_{k,k+i+1}(w)}{w+(n-1)x}\\
        &-\sum_{i=1}^{n-k-2}\frac{G_{n-k-i-1}(w+(k+i+1)z)F_{k+1,k+i+1}(w)}{w+(n-1)x}\\
        &+\sum_{i=1}^{k-1} \frac{(w+iz)G_i(w+x) F_{k-i,n-i}(w+iz)}{w(w+(n-1)x)}\\&-\sum_{i=1}^{k} \frac{(w+iz)G_i(w+x) F_{k-i+1,n-i}(w+iz)}{w(w+(n-1)x)}\\
        =&\sum_{i=1}^{n-k-2}\frac{G_{n-k-i-1}(w+(k+i+1)z)}{w+(n-1)x}\left(F_{k,k+i+1}(w)-F_{k+1,k+i+1}(w)\right)\\
        &+\sum_{i=1}^{k-1} \frac{(w+iz)G_i(w+x)}{w(w+(n-1)x)}\left( F_{k-i,n-i}(w+iz)- F_{k-i+1,n-i}(w+iz)\right)\\
        &+\frac{G_{n-k-1}(w+(k+1)z)F_{k,k+1}(w)}{w+(n-1)x}-\frac{(w+kz)G_k(w+x) F_{1,n-k}(w+kz)}{w(w+(n-1)x)}\\
        =&G_{k}(w+x)\sum_{i=2}^{n-k-1}\frac{G_{n-k-i}(w+(k+i)z)G_{i}(w+kz)}{w(w+(n-1)x)}\\
        &+G_{n-k}(w+kz)\sum_{i=1}^{k-1} \frac{G_i(w+x)G_{k-i}(w+x+iz)}{w(w+(n-1)x)}\\
        &+\frac{G_{n-k-1}(w+(k+1)z)G_{k}(w+x)}{w(w+(n-1)x)(w+kz)}-\frac{(w+kz)G_k(w+x) G_{n-k}(w+kz)}{w(w+(n-1)x)}\\
        =&G_{k}(w+x)\frac{(w+kz+(n-k-1)x)G_{n-k}(w+kz) -\frac{G_{n-k-1}(w+(k+1)z)}{w+kz}}{w(w+(n-1)x)}\\
        &+G_{n-k}(w+kz)\frac{(w+kx)G_{k}(w+x)}{w(w+(n-1)x)}\\
        &+\frac{G_{n-k-1}(w+(k+1)z)G_{k}(w+x)}{w(w+(n-1)x)(w+kz)}-\frac{(w+kz)G_k(w+x) G_{n-k}(w+kz)}{w(w+(n-1)x)}\\
        =&\frac{G_k(w+x) G_{n-k}(w+kz)}{w}.
    \end{align*}
Therefore, the statement holds by the principle of induction.
\end{proof}

Now we prove our main theorem.
\begin{proof}[Proof of Theorem \ref{main-thm}]
    We induct on $n$. The base cases $n\le 2$ can be checked directly. Suppose that $n\ge 3$ and that the statement holds for all values less than $n$. Then, by Theorem \ref{thm2.1}, Theorem \ref{thm2.2} and Theorem \ref{thm2.3}, we have
    \begin{align*}
        &\widecheck{G}_{n}(1-w,-x,-z)\\
        =&\sum_{k=1}^{n-1}\frac{\widecheck{G}_{k}(1-w-x,-x,-z)\widecheck{G}_{n-k}(1-w-kz,-x,-z)}{w}\\
        =&\sum_{k=1}^{n-1}\frac{\widehat{G}_{k}(1-w-x,-x,-z)\widehat{G}_{n-k}(1-w-kz,-x,-z)}{w}\\
        =&\sum_{k=1}^{n-1}\frac{G_k(w+x)G_{n-k}(w+kz)}{w}\\
        =&G_n(w)-F_{2,n}(w)+\sum_{k=2}^{n-2}\left(F_{k,n}(w)-F_{k+1,n}(w)\right)+F_{n-1,n}\\
        =&G_n(w)\\
        =&\widehat{G}_{n}(1-w,-x,-z).
    \end{align*}
Therefore, the statement holds for every $n$ by the principle of induction.
\end{proof}


\begin{thebibliography}{99}
\bibitem{SJ364}
Philippe Jacquet, and Svante Janson. ``Depth-First Search Performance in a Random Digraph with Geometric Degree Distribution.'' In \emph{33rd International Conference on Probabilistic, Combinatorial and Asymptotic Methods for the Analysis of Algorithms (AofA 2022)}. Schloss Dagstuhl-Leibniz-Zentrum für Informatik, 2022.
\bibitem{J}
Svante Janson. ``On Knuth’s Conjecture for Back and Forward Arcs in Depth First Search in a Random Digraph with Geometric Outdegree Distribution.'' \emph{arXiv:2301.04131}, 2023.

\bibitem{Knuth12A}
Donald E. Knuth. \emph{The Art of Computer Programming}. Vol. 4. Pre-Fascicle 12A. 
(Preliminary draft, 13 February 2022).
\\ {\tt http://cs.stanford.edu/\char`\~knuth/fasc12a.ps.gz}

\bibitem{Polya}
George Pólya. \emph{How to solve it: A new aspect of mathematical method.} Vol. 85. Princeton university press, 2004.
\end{thebibliography}
\end{document}